\newtheorem{theorem}{Theorem}[section]
\newtheorem{corollary}[theorem]{Corollary}
\newenvironment{proof}[1][Proof]{\begin{trivlist}
\item[\hskip \labelsep {\bfseries #1}]}{\end{trivlist}}
\newcommand{\qed}{\nobreak \ifvmode \relax \else
      \ifdim\lastskip<1.5em \hskip-\lastskip
      \hskip1.5em plus0em minus0.5em \fi \nobreak
      \vrule height0.75em width0.5em depth0.25em\fi}
\title{Billiards on Graphs}
\author{Thomas Silverman, Stephen Michael Miller}
\begin{document}
\maketitle

\begin{abstract}
We prove some partial results on the periodicity of billiard systems on graphs. 
\end{abstract}

\section{Introduction}
We consider billiards arranged on the faces of a tetrahedron, one per face, following the edges, such that upon a collision, each billiard goes in the opposite direction obeying conservation of momentum. The cases where each billiard has arbitrary velocity, or arbitrary mass, appear to be too complicated for our methods; we specialise to the case where each billiard has equal mass and velocity. We are able to prove a periodicity result for more than just the tetrahedral case, but rather for any number of billiards travelling along along the proper analogue of the faces on an arbitrary graph (which we first take to have all edge lengths equal, but quickly extend the result to rationally related edge lengths). In particular, this result specializes to the case of $n$ billiards on the unit interval or the circle. Because of our simplifications, for the remainder of this paper, all billiards have the same mass and travel at the same speed.

\section{Periodicity of Billiards on Graphs}

We wish to find an analogue of the faces of a tetrahedron for an arbitrary graph. What we would like is, for each billiard, a path on the graph that it will travel along and an orientation (so that it will be meaningful to say that the billiard reverses direction on collision). The analogue we seek is a directed cycle, defined as follows:\\\\\\
A \emph{directed cycle} of length $n$ in a graph $G$ is a sequence of $n$ vertices $\{v_i\}$ and a sequence of $n$ edges $\{e_i\}$ such that $v_1 = v_{n+1}$ and $e_i$ has endpoints $v_i$ and $v_{i+1}$.\\\\
We will associate to each billiard a directed cycle, so that it will travel from each vertex to the next along the specified edges ($v_i$ to $v_{i+1}$ along $e_i$), and if it collides with another billiard, will travel along the cycle in the reverse direction ($v_{i+1}$ to $v_i$ along $e_i$). The ordering on vertices allows us to distinguish the orientation. However, if some directed cycle follows an edge that connects a vertex to itself, we can't distinguish orientation in this way, so we forbid our graphs to have such edges ("loops"). This in mind, we formalize the billiard system.\\
Let $G$ be a loopless graph. An initial condition for the dynamical system on $G$ is $k$ (not necessarily distinct) directed cycles with a billiard on each directedy cycle. Attached to each billiard is an orientation $\pm 1$ describing which direction of the directed cycle it's traveling on, its location in the directed cycle (i.e., an integer $1 \leq i \leq n$ denoting which edge it's on if the cycle has $n$ edges, and the distance $x\in (0,1]$ to the next vertex). As time passes, billiards travel along the directed cycle at unit speed, only changing course on a collision with another billiard, at which point it reverses orientation and continues on in this direction.\\\\
Note that the system is reversible. Given a state of the system with no two billiards colliding at that point in time, simply reverse the orientation of every billiard on the graph. The time evolution of this state is the reverse time evolution of the original state.\\
\begin{theorem}
Every orbit in $G$ is periodic.
\end{theorem}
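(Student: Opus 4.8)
The plan is to recast the whole system as a single straight-line flow and then extract periodicity from rationality. Record the $i$-th directed cycle as a circle $S_i=\mathbb R/n_i\mathbb Z$, where $n_i$ is its length, together with the immersion $\pi_i\colon S_i\to G$ it determines; since every edge has length $1$, each vertex of $G$ sits at integer points of the circles $S_i$ covering it and each edge is covered by unit intervals. A state of the system is then a point $x=(x_1,\dots,x_k)$ of the torus $X=\prod_i S_i$ together with a velocity $\varepsilon=(\varepsilon_1,\dots,\varepsilon_k)\in\{\pm1\}^k$: between collisions the configuration moves by $x(t)=x(0)+t\varepsilon$, a collision of billiards $i$ and $j$ is exactly the event that this line meets the wall $C_{ij}=\{x:\pi_i(x_i)=\pi_j(x_j)\}$, and ``both reverse'' means $\varepsilon_i\mapsto-\varepsilon_i$ and $\varepsilon_j\mapsto-\varepsilon_j$ there. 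Because a shared edge is covered by unit intervals, on each stratum the wall equation reads $x_i\pm x_j=m$ with $m\in\mathbb Z$ (the sign recording whether $S_i$ and $S_j$ traverse that edge in the same or opposite sense), so $C=\bigcup_{i<j}C_{ij}$ is a rational polyhedral subcomplex of the integer torus $X$ (wall-normals among the $e_i\pm e_j$), and the orbit is the billiard trajectory in the component $P$ of $X\setminus C$ containing the initial configuration.

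The first real step is to verify that ``both reverse'' is literally the billiard reflection of $\varepsilon$ across the wall being hit: reflecting $\varepsilon$ across the hyperplane with normal $\sigma_i e_i-\sigma_j e_j$ sends $\varepsilon_i\mapsto\sigma_i\sigma_j\varepsilon_j$ and $\varepsilon_j\mapsto\sigma_i\sigma_j\varepsilon_i$, while a non-grazing collision forces the two graph-velocities to be opposite, i.e. $\varepsilon_i=-\sigma_i\sigma_j\varepsilon_j$, which turns the reflection into exactly $\varepsilon_i\mapsto-\varepsilon_i$ and $\varepsilon_j\mapsto-\varepsilon_j$. Hence the velocity stays in the finite set $\{\pm1\}^k$ for all time and we are looking at a genuine rational polyhedral billiard in the torus $X$. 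Passing, if necessary, to the finite cover $\mathbb R^k/(\operatorname{lcm}_i n_i)\mathbb Z^k\to X$, we may arrange that the group generated by the wall reflections and the integer translations they introduce is a finite group of isometries of the torus (this is harmless, since periodicity of the lifted orbit forces periodicity of the original).

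Now unfold. Since that group is finite and all walls are rational, gluing the finitely many reflected copies of the billiard domain $P$ along their walls produces a closed flat space $\widehat P$ --- built from finitely many translated copies of a rational polyhedral piece of a rational torus, hence itself a rational translation space, branched over the lower-dimensional strata of $C$ --- on which the billiard trajectory straightens to a geodesic in a single direction $w\in\{\pm1\}^k$. Projecting this geodesic down to $X$ gives a straight line in a direction of $\{\pm1\}^k$, which is closed because $(\operatorname{lcm}_i n_i)$ times such a direction lies in the lattice $\prod_i n_i\mathbb Z$ of $X$. A lift of a closed loop to the finite branched gluing $\widehat P$ is again closed as long as it avoids the branch locus, and a closed geodesic on $\widehat P$ is exactly a periodic billiard orbit; folding back recovers a periodic orbit of the original labelled system. (The reversibility already noted ensures the flow is invertible, so that even ``eventually periodic'' would upgrade to ``periodic'' --- there is no transient to discard.)

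The step I expect to be the genuine obstacle is this branch locus. The argument above gives periodicity only for orbits that never pass through a corner of $C$, i.e. that never have a \emph{degenerate} collision --- three or more billiards meeting at one point, a collision occurring exactly at a vertex of $G$, or two billiards travelling together along an edge. One must either extend the flow through such events by a consistent, reversibility-preserving rule and check that the unfolded geodesic still closes up, or else dispose of these non-generic configurations separately; for the equal-length case one can first subdivide every edge at its midpoint to push all non-vertex collisions off the original vertices, but truly simultaneous multiple collisions still require a resolution convention, and pinning that down --- while confirming that both the billiard-to-torus dictionary and the reversibility survive it --- is where the substance of the proof lies.
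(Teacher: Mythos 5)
Your reduction of the system to a rational billiard on the torus $X=\prod_i\mathbb R/n_i\mathbb Z$ is sound and genuinely different from the paper's argument: the walls are indeed the affine hyperplanes $x_i\pm x_j=m$ with $m\in\mathbb Z$, your check that ``both billiards reverse'' coincides with orthogonal reflection in such a wall is correct, and it follows that the velocity stays in $\{\pm1\}^k$. (The paper instead relabels the billiards at each pairwise collision --- each continues in the direction it was going but on the other's cycle --- so that the fractional part of each position is literally conserved; the relabelled system then visits only finitely many states at integer times, and reversibility upgrades eventual periodicity to periodicity. No unfolding is needed.) But, as you say yourself in the last paragraph, what you have written proves the theorem only for orbits that never meet the singular locus, and that is a genuine gap rather than a loose end: the theorem asserts that \emph{every} orbit is periodic, and the dynamics is defined --- each billiard reverses --- at simultaneous multiple collisions, at collisions occurring exactly at a vertex of $G$, and for billiards that begin coincident and co-moving. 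These are honest orbits of the system, not a measure-zero set one may discard as in the usual theory of rational billiards, and your unfolding says nothing about them. Until you give a rule-respecting continuation through the branch locus and show the resulting trajectory still closes up, the proof is incomplete by your own accounting.

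Two further points on the generic part. First, ``a lift of a closed loop to the finite branched gluing $\widehat P$ is again closed'' is false as stated: a lift of a closed loop to a finite cover need only close up after the loop is iterated a number of times equal to the order of its monodromy permutation. That still yields periodicity, with a possibly larger period, but it has to be said. Second, the claim that $\widehat P$ is a translation cover of $X$ under which the unfolded geodesic projects to a single straight line of direction $w$ is doing real work and is only asserted: the reflected copies of $P$ need not tile the torus, so you must either construct the developing map $\widehat P\to X$ explicitly and check that it is a local translation off the branch locus, or argue directly that the constant-direction flow on the abstract gluing is periodic. None of this is fatal to the strategy, and the torus picture is illuminating, but the elementary relabelling argument disposes of all orbits --- degenerate collisions included --- without any of this machinery.
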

\begin{proof}
We consider a modified system. Attached to each billiard in this modified system is its position (as encoded by the edge and distance as before), an orientation, and which directed cycle it's traveling on. Upon collision with a single other billiard, each billiard travels on the other's directed cycle, reversing orientation; effectively, each billiard continues traveling the same direction it was before, along a different cycle. If multiple billiards collide, each billiard stays on its own directed cycle and reverses orientation, traveling the way it came. It suffices to show that this system is periodic, as it's simply the original system with added information - we can recover the original system by permuting the billiards so that each is on its original directed cycle.\\
Now, after unit time, the modified system can take on only a finite number of states: the number of edges in the cycles is finite, as is the number of directed cycles, as is the number of billiards. The only potential obstacle to this is the distance to the next vertex $x$; but since $x$ (the distance to the next vertex) does not change upon collisions in the modified system, and each edge has unit length, the distance to the next vertex is unchanged after unit time. So we have only a finite number of possible states after unit time, so there are integers $m$ and $n$ such that the system is the same after times $m$ and $n$; as the system is reversible, it's periodic, with period dividing $m-n$.
\end{proof}
Note that the proof does not rely on each edge having unit length. If each edge has equal length $\alpha$ then in the proof we simply replace "unit time" by "after time $\alpha$". Similarly, we do not need that all edges have equal length; it suffices that they are rationally related.
\begin{corollary}
If all edge lengths in a graph $G$ are rationally related, then the billiard
graph dynamical system on $G$ is periodic.
\end{corollary}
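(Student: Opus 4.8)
The plan is to reduce the statement to Theorem~2.1 (the unit-length case) by an edge-subdivision argument, which is the content behind the remark following that theorem. First I would rescale. Since the edge lengths of $G$ are pairwise rationally related, dividing every length by a suitable common factor — equivalently, reparametrizing time at a constant rate — turns each edge length into a positive integer. Rescaling changes neither the qualitative dynamics nor whether an orbit is periodic (it only rescales the period), so it suffices to prove periodicity when $G$ has all edge lengths in $\mathbb{Z}_{>0}$.

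Next I would introduce the auxiliary graph $G'$ obtained by subdividing: replace each edge $e$ of length $\ell_e$ by a path of $\ell_e$ unit-length edges, inserting $\ell_e - 1$ new internal vertices, each of degree $2$. Then $G'$ is loopless, since subdivision creates no loops and $G$ had none. Each directed cycle of $G$ through edges $e_1,\dots,e_n$ lifts canonically to a directed cycle of $G'$ of length $\sum_i \ell_{e_i}$, obtained by concatenating the subdivided paths in order; and each point of an edge of $G$ corresponds to a unique point of an edge of $G'$, sitting at the natural distance in $(0,1]$ from the appropriate (possibly newly inserted) vertex. Consequently any initial condition on $G$ lifts to an initial condition on $G'$, and this lift is a bijection between states of the two systems.

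The third step is to check that this lift intertwines the two dynamical systems, so that it carries orbits to orbits with the same period (after undoing the rescaling). Away from collisions a billiard moves at unit speed along its directed cycle in both pictures, and it passes through the new degree-$2$ vertices of $G'$ without incident because the directed cycle singles out the unique continuing edge. A collision of billiards happens at a point of $G$ exactly when the corresponding collision happens at the corresponding point of $G'$, and whether that point is an original vertex, one of the inserted degree-$2$ vertices, or an edge interior is irrelevant, since in every case the rule is simply to reverse orientation. Hence periodicity of the lifted orbit on $G'$, which has unit edge lengths and to which Theorem~2.1 applies directly, forces periodicity of the original orbit on $G$.

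I do not expect a genuine obstacle here; the corollary is essentially the bookkeeping promised by the remark. The one place to be careful is the third step: confirming that the collision behaviour truly matches across the subdivision — in particular that simultaneous multi-billiard collisions, and collisions that land precisely on an inserted degree-$2$ vertex, are resolved consistently by the two systems. This is routine precisely because the collision rule in the definition of the system never distinguishes vertices from interior points of edges.
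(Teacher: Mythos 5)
Your proposal is correct and follows essentially the same route as the paper: subdivide each edge into pieces of a common length (the paper uses $\alpha/q$ with $q$ the lcm of the denominators; you rescale first so the pieces are unit length) and invoke the theorem for the equal-edge-length case via the natural bijection of states. Your extra care in checking that the bijection intertwines collisions at the inserted degree-$2$ vertices is a welcome elaboration of what the paper leaves implicit, but not a different argument.
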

\begin{proof}
Pick the smallest edge of $G$; say it has length $\alpha$. Then all edges are
of the form $\alpha \frac{p_i}{q_i}$. Set $q:=\textrm{lcm}(q_1,q_2,...q_E)$, with $E$
the cardinality of the edge set. Then subdivide each edge into $\frac{p_iq}{q_i}$
edges of length $\frac{\alpha}{q}$. This produces a graph with each edge of equal
length and a natural bijection between states of this graph and the original.
Replacing unit time with time $\alpha$ in the original proof produces the
corollary.\end{proof}

\begin{corollary}The motion of $n$ balls with the same mass and velocity on the unit circle or interval with elastic collisions is periodic.\end{corollary}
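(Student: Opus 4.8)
The plan is to present each of these two mechanical systems as a particular instance of the graph billiard dynamics above and then to quote the Corollary. The circle is immediate. For any $N\ge 3$, a circle of total length $\ell$ is the graph $C_N$ with $N$ vertices joined cyclically by $N$ edges of length $\ell/N$, and $n$ balls of equal mass and equal speed colliding elastically on it are precisely $n$ billiards on $C_N$, each assigned the directed cycle that runs once around all $N$ edges: for equal speeds an elastic collision merely interchanges the two balls' velocities, i.e.\ reverses both of their directions, and that is exactly the reversal of orientation along this common directed cycle. Since all edges of $C_N$ have equal length, the Corollary applies verbatim.

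The interval calls for one idea: to manufacture its two reflecting walls --- which the bare graph model does not supply --- out of a directed cycle that doubles back on itself. Let $G$ be the graph with two vertices $u,v$ and a single edge $e$ of length $1$ between them; this is the unit interval. Give every ball the directed cycle of length $2$ running $u\to v\to u$, which traverses $e$ once in each direction; this is a legitimate directed cycle, since the definition allows an edge to be repeated in the sequence and forbids only loops. I would then check the dictionary between the two systems. A ball at interval position $p$ moving toward $v$ corresponds to a billiard on the first leg of this cycle, orientation $+1$, at distance $1-p$ from $v$, while one moving toward $u$ corresponds to a billiard on the second leg, orientation $+1$, at distance $p$ from $u$. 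A ball that reaches an endpoint with no other ball present simply continues along its directed cycle, which sends it back along $e$ and hence reverses its direction --- this is exactly the reflection off a wall. And two balls sit at the same point of $[0,1]$ precisely when the corresponding billiards sit at the same point of $G$, so the collisions of the two systems are the same events, and the graph rule ``reverse both orientations'' is nothing but the equal-speed elastic rule ``reverse both velocities''. Thus the map that forgets, for each billiard, which of the two legs of the doubled cycle it is currently traversing is a surjection from the graph billiard system on $G$ onto the $n$-ball system on $[0,1]$, and it intertwines the two time evolutions; since the former is periodic by the Corollary --- with a single edge the edge-length condition is automatic --- so is the latter, with period dividing that of any lift of the given initial condition.

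I expect the encoding of the walls to be the only point that needs thought. The natural-looking alternative, ``unfolding'' $[0,1]$ into a circle of length $2$ built from two parallel copies of $e$, also works, but there two billiards lying on the two distinct copies never meet, so the balls pass through one another instead of bouncing, and one recovers the interval dynamics only up to a relabeling of the balls; that relabeling can be removed by iterating the orbit until the induced permutation of the $n$ balls becomes the identity, using that there are finitely many permutations and that the dynamics is symmetric under relabeling of identical balls. Folding a single edge onto itself avoids this, because the two legs of the directed cycle are the same physical edge; with that choice the remaining verifications are routine, and since the Theorem already covers several balls colliding simultaneously and a ball meeting a wall at the instant of a collision, no restriction on the initial condition is needed.
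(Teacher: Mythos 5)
Your proposal is correct and models both systems exactly as the paper does: the circle as a cyclic graph with every ball assigned the full directed cycle, and the interval as the single-edge graph on two vertices with the directed cycle $u \to v \to u$ that doubles back along the same edge, so that reflection at the walls and head-on collisions are both captured by orientation reversal. You merely spell out the dictionary (and dismiss the unfolding alternative) in more detail than the paper, which states these reductions without elaboration.
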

\begin{proof}These are simply special cases of Theorem 2.1. The unit circle is given by the cyclic graph on some number of edges, each ball traversing the same directed cycle (the entire graph), and the unit interval is given by the tree on two vertices, with each directed cycle starting at one vertex, traveling to the next, and returning to where it started.\end{proof} It should be noted that in this case, our proof of Theorem 2.1 is simplified by every billiard following the same directed cycle: swapping directed cycles on collision does not change the direction of movement, but is more akin to assigning a color to each ball and swapping colors on collision. 

\section{Acknowledgements}
The authors would like to thank Misha Guysinsky and Viorel Nitica for their guidance in writing this article, and the Penn State REU program for financial support.
\end{document}